\definecolor{linkred}{RGB}{199,21,133}
\definecolor{linkblue}{RGB}{16, 78, 139}
\newcommand{\calT}{\mathcal{T}}
\newcommand{\T}{\calT}
\newcommand{\M}{\mathcal{M}}
\newcommand{\sM}{\mathcal{M}}
\newcommand{\sA}{\mathcal{A}}
\newcommand{\R}{\mathbb{R}}
\renewcommand{\H}{\mathbb{H}}
\newcommand{\eps}{\varepsilon}
\newcommand{\wep}{Weil-Petersson}
\newcommand{\sbs}{\subset}
\newcommand{\limg}{\lim_{g\rightarrow\infty}}
\newcommand{\sT}{\mathcal{T}}
\newcommand{\dvol}{d\mathit{vol}}
\def\sys{\mathop{\rm sys}}
\def\area{\mathop{\rm Area}}
\def\Vol{\mathop{\rm Vol}}
\def\Prob{\mathop{\rm Prob}\nolimits_{\rm WP}^g}
\def\E{\mathop{\mathbb{E}_{\rm WP}^g}}
\def\diam{\mathop{\rm Diam}}
\DeclareMathOperator{\WP}{WP}
\newcommand{\ls}{\ell_{\sys}}
\newcommand{\lss}{\ell_{\sys}^{\rm sep}}
\newcommand{\ssys}{\ell_{\sys}^{\rm sep}}
\newcommand{\ewp}{\mathbb{E}_{\rm WP}^{g}}
\theoremstyle{plain}
\newtheorem{theorem}{Theorem}
\newtheorem{proposition}[theorem]{Proposition}
\newtheorem{lemma}[theorem]{Lemma}
\newtheorem{remark}[theorem]{Remark}
\newcommand{\be}{\begin{equation}}
\newcommand{\ene}{\end{equation}}
\newcommand{\br}{\begin{remark}}
	\newcommand{\er}{\end{remark}}
\newcommand{\bl}{\begin{lem}}
	\newcommand{\el}{\end{lem}}
\newcommand{\bcor}{\begin{cor}}
	\newcommand{\ecor}{\end{cor}}
\newcommand{\bpro}{\begin{pro}}
	\newcommand{\epro}{\end{pro}}
\newcommand{\ben}{\begin{enumerate}}
	\newcommand{\een}{\end{enumerate}}
\newcommand{\bp}{\begin{proof}}
	\newcommand{\ep}{\end{proof}}
\newcommand{\bpo}{\begin{pro}}
	\newcommand{\epo}{\end{pro}}
\newcommand{\beq}{\begin{equation*}}
\newcommand{\eeq}{\end{equation*}}
\newcommand{\bear}{\begin{eqnarray}}
\newcommand{\eear}{\end{eqnarray}}
\newcommand{\beqar}{\begin{eqnarray*}}
	\newcommand{\eeqar}{\end{eqnarray*}}
\newcommand{\bt}{\begin{theorem}}
	\newcommand{\et}{\end{theorem}}
\newcommand{\bex}{\begin{excer}}
	\newcommand{\eex}{\end{excer}}
\theoremstyle{definition}
\theoremstyle{remark}
\newtheorem*{rem*}{Remark}
\newtheorem*{def*}{Definition}
\newtheorem*{definition*}{Definition}
\newtheorem*{thm*}{\bf Theorem}
\begin{document}
	
\title{The simple separating systole for hyperbolic surfaces of large genus}

\author{Hugo Parlier, Yunhui Wu, and Yuhao Xue}
\thanks{H.P. supported by the Luxembourg National Research Fund OPEN grant O19/13865598.}
\thanks{Y.W. supported by a grant from Tsinghua University.}
\address{Department of Mathematics, University of Luxembourg, Esch-sur-Alzette, Luxembourg}
\email[(H.~P.)]{hugo.parlier@uni.lu}

\address{Yau Mathematical Sciences Center \& Department of Mathematical Sciences, Tsinghua University, Beijing, China}
\email[(Y.~W.)]{yunhui\_wu@tsinghua.edu.cn}
\email[(Y.~X.)]{xueyh18@mails.tsinghua.edu.cn}

\date{}
\maketitle

\begin{abstract}
In this note we show that the expected value of the separating systole of a random surface of genus $g$ with respect to Weil-Petersson volume behaves like $2\log g $ as the genus goes to infinity. This is in strong contrast to the behavior of the expected value of the systole which, by results of Mirzakhani and Petri, is independent of genus.
\end{abstract}

\section{Introduction}

Over the last couple of decades, different models for random hyperbolic surfaces which sample large genus behavior have been studied. Two models are particularly natural. The first, more combinatorial in nature, consists in randomly gluing triangles and looking at the resulting conformal class of hyperbolic metric \cite{BrooksMakover}. The second in using the Weil-Petersson volume form on the moduli space $\M_g$ of hyperbolic structures on a genus $g$ surface, and then sampling by choosing a random point in each genus and letting the genus grow. 

This latter model was studied by Mirzakhani \cite{Mirz13} who, using her pioneering results on Weil-Petersson volumes, showed a number of striking results, for example that random surfaces have small diameter but large embedded balls. She also studied the systole (the length of the shortest closed geodesic) and separating systole (the length of the shortest separating simple closed geodesic) showing that the former was ``small but not too small" and the latter grew at least like $\log(g)$. The behavior of lengths of curves was further explored by Mirzakhani and Petri in \cite{MirzakhaniPetri} where they show the number of short curves is asymptotically Poisson distributed, and as an application computed the expected value of the systole (which is an explicit positive constant). Note that this behavior is independent of genus. These results mirror similar results for the combinatorial model \cite{Petri, PetriThale}, reinforcing bridges between the two models \cite{GPY}, but also some key differences in the behavior of (very) short curves.

We focus on the Weil-Petersson model, and study the length of the separating systole $\lss(X)$ of a surface $X$:
$$
\lss(X)=\min\big\{\ell_\gamma(X)\,\mid\, \text{$\gamma\subset X$ is a separating simple closed geodesic}\big\}.
$$
Unlike the systole function, the separating systole function is unbounded on $\M_g$. This is not too difficult to see by considering a pants decomposition of a genus $g$ surface whose geodesics are all non-separating. By making these arbitrarily short, via the collar lemma, the length of any curve that crosses them is at least twice the collar width, and the observation follows. A related, but different, function on moduli space is given by the length of the shortest geodesic trivial in homology (but homotopically non-trivial), studied by Sabourau \cite{Sabourau} who showed a universal upper bound on the same order as the corresponding bound on systole (on the order of $\log(g)$). 

We study the expectation of this function. Let $\E[\lss]$ be the Weil-Petersson expected value of $\lss(\cdot)$ over $\M_g$:
\[\E[\lss]=\frac{\int_{\M_g}\lss(X)dX}{V_g}\]
where $V_g$ is the Weil-Petersson volume of $\M_g$. As mentioned previously, Mirzakhani's results showed that the expected values grow at least like $\log(g)$ as a function of genus. 

Building on Mirzakhani's methods, and essential probability estimates in \cite{NWX20}, we add the following asymptotic growth result to this panorama:

\begin{theorem}\label{mt-1}
Let $\lss(\cdot)$ be the separating systole function on $\M_g$. Then
\[\lim \limits_{g\to \infty}\frac{\E[\lss]}{2\log g}=1.\]
\end{theorem}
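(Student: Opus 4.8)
The plan is to establish matching asymptotic bounds $\liminf_{g}\E[\lss]/(2\log g)\ge 1$ and $\limsup_{g}\E[\lss]/(2\log g)\le 1$, both governed by the separating curves that cut off a one-holed torus (a single ``handle''). These are the extremal type: for a separating curve splitting $X$ into pieces of genus $g_1$ and $g-g_1$, Mirzakhani's integration formula produces the integrand $\ell\,V_{g_1,1}(\ell)\,V_{g-g_1,1}(\ell)$, and as $g\to\infty$ with $\ell=O(\log g)$ only the large side contributes exponential length-growth, $V_{g-g_1,1}(\ell)/V_{g-g_1,1}\to \sinh(\ell/2)/(\ell/2)$, while a small genus side contributes only the fixed polynomial $V_{g_1,1}(\ell)$ (for $g_1=1$, exactly $V_{1,1}(\ell)=\tfrac1{48}(\ell^2+4\pi^2)$). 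Thus the total growth is the half-exponential $e^{\ell/2}$ rather than the $e^\ell$ of a balanced split, and this halving of the exponent is precisely the source of the factor $2$.

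For the lower bound I would run a first moment argument. Let $N_{g_1}(L)$ count separating geodesics of type $g_1\in\{1,\dots,\lfloor g/2\rfloor\}$ and length at most $L$. By Mirzakhani's integration formula and a symmetry factor,
\[
\E[N_{g_1}(L)]=\frac{c_{g_1}}{V_g}\int_0^L \ell\,V_{g_1,1}(\ell)\,V_{g-g_1,1}(\ell)\,d\ell .
\]
Using the Mirzakhani--Zograf asymptotics $V_{g,n}\sim (2g-3+n)!\,(4\pi^2)^{2g-3+n}/\sqrt{g\pi}$, one finds $V_{g-g_1,1}/V_g\asymp (8\pi^2 g)^{-(2g_1-1)}$, so the genus-one term behaves like $L^2 e^{L/2}/g$, with crossover at $L=2\log g$, while each type $g_1\ge 2$ has a strictly larger crossover (the extra volume suppression $g^{-(2g_1-1)}$ dominates the extra powers of $L$, and balanced splits are super-exponentially suppressed by the central binomial coefficient). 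Hence at $L=(2-\eps)\log g$ the full sum $\sum_{g_1}\E[N_{g_1}(L)]\to 0$, so by Markov's inequality $\Prob(\lss\le (2-\eps)\log g)\to 0$ and therefore $\E[\lss]\ge (2-\eps-o(1))\log g$.

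For the upper bound the first moment is insufficient, and I would use a second moment argument on $N_1(L)$, the number of genus-one separating geodesics of length at most $L=(2+\eps)\log g$, for which the computation above gives $\E[N_1(L)]\asymp L^2 g^{\eps/2}\to\infty$. The decisive step is to bound the variance: expanding $\E[N_1(L)^2]$ yields a diagonal contribution $\E[N_1(L)]$ plus an off-diagonal sum over ordered pairs of distinct genus-one separating geodesics, evaluated through Mirzakhani's integration formula for multicurves and the probabilistic volume estimates of \cite{NWX20}. The target is $\E[N_1(L)^2]\le (1+o(1))\E[N_1(L)]^2$, giving $\mathrm{Var}(N_1(L))=o(\E[N_1(L)]^2)$ and, by Chebyshev, $\Prob(N_1(L)=0)\to 0$; thus $\lss\le (2+\eps)\log g$ with probability $1-o(1)$. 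To pass from a high-probability statement to the expectation I would write $\E[\lss]=\int_0^\infty \Prob(\lss>t)\,dt$, bound the part below $L_0=(2+\eps)\log g$ by $L_0$, and control the tail using the same second moment estimates (with a separate, cruder argument disposing of the range $t\gtrsim \sqrt g$ where the $\sinh$ and volume asymptotics degrade), so that $\int_{L_0}^\infty \Prob(\lss>t)\,dt=o(\log g)$ and $\E[\lss]\le (2+o(1))\log g$.

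The main obstacle is the variance estimate. Controlling the off-diagonal sum requires enumerating the ways two distinct genus-one separating simple closed geodesics can coexist on $X$ (disjoint and co-bounding, nested, or intersecting), computing the corresponding multicurve volume polynomials, and showing their total does not exceed $\E[N_1(L)]^2$ to leading order; this is exactly the delicate volume-ratio input supplied by \cite{NWX20}, and the same estimates must be made uniform enough in $t$ to close the tail integral. By contrast, the first moment sum and the deterministic dominance of the genus-one type are routine once the Mirzakhani--Zograf asymptotics and the elementary $\sinh$-type control of $V_{g,n}(\ell)$ are in hand.
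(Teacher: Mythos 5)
Your lower bound and your high-probability upper bound are sound in outline, but they essentially reprove results the paper simply cites: Mirzakhani's \cite[Theorem 4.4]{Mirz13} for the lower bound, and \cite[Theorem 1]{NWX20} for the statement that $\lss(X)<2\log g$ with probability $1-o(1)$ (the second moment analysis you sketch \emph{is} the content of that paper, so deferring its ``delicate volume-ratio input'' to \cite{NWX20} is fine as a citation but adds nothing). The genuine gap is in your very last step, which is precisely where the paper's actual contribution lies. Since $\lss$ is unbounded on $\M_g$ (pinch a pants decomposition made of non-separating curves; the collar lemma forces every separating curve to be long), the identity $\E[\lss]=\int_0^\infty\Prob(\lss>t)\,dt$ requires control of $\Prob(\lss>t)$ for \emph{all} $t$, and a count of genus-one separating curves of length at most $t$ cannot supply this in the deep tail: for $t$ large compared to $\log g$ the Chebyshev bound needs the off-diagonal second moment to be controlled uniformly in $t$, which is not established anywhere and is not what \cite{NWX20} provides, while your ``separate, cruder argument disposing of the range $t\gtrsim\sqrt g$'' is left entirely unspecified. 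That unspecified step is the whole problem the paper sets out to solve.

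The paper closes the tail deterministically rather than by counting. It proves the pointwise inequality $\lss(X)<2\,\ls(X)+4\,\diam(X)$ (via a minimal-length homology basis and the commutator of two intersecting basis curves), combines it with Brooks' bound on $\diam(X)$ in terms of $\ls(X)$ and the Cheeger constant $h(X)$ to get $\lss(X)=O\bigl(\log g+\tfrac{\log g}{h(X)}+\tfrac{|\log \ell_1(X)|}{h(X)}\bigr)$ with $\ell_1=\min\{\ls,1\}$, and then establishes two integrability statements: $\tfrac{1}{V_g}\int_{\sA}\tfrac{1}{H(X)}\,dX\le c\bigl(\tfrac{\Vol(\sA)}{V_g}+\tfrac1g\bigr)$ for any measurable $\sA\subset\M_g$ (a layer-cake argument using Mirzakhani's estimate $\Vol(\{H\le\eps\})\lesssim \eps^2V_g/g$), and $\int_{\M_g}\tfrac{|\log \ell_1|}{h}\,dX=O(V_g)$ (H\"older plus Mirzakhani's moment bounds). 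Applied to the bad set $\sA_g=\{\lss\ge 2\log g\}$, whose measure tends to zero by \cite{NWX20}, this makes the tail contribution $o(\log g)$. To repair your proposal you would need either to adopt this Cheeger-constant route or to supply a second moment bound that is genuinely uniform in $t$ over the entire unbounded range; as written, the $\limsup$ direction is not proved.
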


This result gives an affirmative answer to \cite[Question 49]{NWX20}. 

The fact that the expected value is at least on the order of $2\log(g)$ follows from Mirzakhani's estimates. The main point of this note is the upper limit, and uses the main result from \cite{NWX20} in an essential way. Indeed, in  \cite{NWX20} it is shown that that the separating systole is of length $< 2 \log (g)$ with probability tending to $1$ as the genus goes to infinity. This does not imply the expected value result however, because the separating systole is unbounded over $\M_g$. Our contribution in this paper is exactly to overcome this problem.

\subsection*{Plan of the paper.} In Section \ref{pre} we review the relevant background and provide an upper bound for $\lss(X)$ in terms of the systole and diameter of $X$. In Section \ref{sec-1/h} we prove two bounds for the integral of the reciprocal of the Cheeger constant over small subsets in moduli space. We complete the proof of Theorem \ref{mt-1} in Section \ref{Sec-mt-1}. 

\subsection*{Acknowledgements.}
The authors are grateful to Curtis McMullen and Alex Wright for the correspondence on the proof of  \cite[Theorem 4.4]{Mirz13}. 


\section{Preliminaries} \label{pre}

In this section, we set up notation and review relevant background material about hyperbolic surfaces and the Weil-Petersson metric on the moduli space of Riemann surfaces.

\subsection{Riemann surfaces}
Let $\Sigma_g$ be a closed orientable surface of genus $g\geq 2$. Let $\M_g$ be the moduli space of all complete hyperbolic metrics homeomorphic to $\Sigma_g$, up to isometry. Any $X\in \M_g$, by Gauss-Bonnet, has its area $\area(X)=4\pi(g-1)$. We are interested in simple closed geodesics of $X$ and sometimes in collections of disjoint simple closed geodesics which we call geodesic multicurves. The length of a geodesic multicurve $\gamma$ will be denoted $\ell_{\gamma}(X)$.

The \emph{systole} $\ls(X)$, that is the length of the (or a) shortest closed geodesic on $X$, satisfies an upper bound on the order $\ls(X)=O(\log g)$ where the implied constant is independent of $g$. For a closed surface, the systole is always a simple closed geodesic. Similarly, the shortest non-separating closed geodesic is always simple, but this may not be the case of the shortest {\it separating geodesic} (see the example hinted at in the introduction). In this paper, we study the \emph{separating systole} $\lss(X)$ of $X$, the length of the shortest separating and simple closed geodesic of $X$. 

The following estimate will be used in the sequel. We state it for any closed orientable Riemannian surface just to highlight that it has nothing to do with hyperbolic geometry (and the proof is the same). 
\begin{lemma}\label{lss<diam}
Let $X$ be a closed Riemannian surface of genus $g\geq 2$. Then the length $\ssys(X)$ of its separating systole satisfies
$$
\ssys(X)< 2\, \ls(X)+4\, \diam(X)
$$
where $\ls(X)$ is the systole length of $X$ and $\diam(X)$ is the diameter of $X$.
\end{lemma}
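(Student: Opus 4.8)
The plan is to exhibit an explicit essential separating simple closed curve of length less than $2\,\ls(X)+4\,\diam(X)$; its geodesic representative is then a separating simple closed geodesic of at most that length, which bounds $\ssys(X)$. First I would realise the systole by a shortest closed geodesic $\gamma_0$, which for a closed surface of genus $g\ge 2$ is automatically simple. If $\gamma_0$ is separating we are already done, since then $\ssys(X)\le \ls(X)<2\,\ls(X)+4\,\diam(X)$. So I may assume $\gamma:=\gamma_0$ is non-separating and simple, of length $\ls(X)$.

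Cutting $X$ along $\gamma$ yields a connected surface $X'$ with two boundary circles $b_1,b_2$, each an isometric copy of $\gamma$. Let $c$ be a shortest arc in $X'$ joining $b_1$ to $b_2$. After regluing, $c$ becomes a simple arc meeting $\gamma$ only at its two endpoints, approaching $\gamma$ from the two distinct sides; hence a regular neighbourhood $T$ of $\gamma\cup c$ is a one-holed torus, and $\partial T$ is a single separating simple closed curve (it splits $X$ into a genus-one piece and a genus-$(g-1)$ piece, both essential since $g\ge 2$). Tracing $\partial T$ once around the graph $\gamma\cup c$ shows it is freely homotopic to a representative that runs twice along $\gamma$ and twice along $c$, of length exactly $2\,\ls(X)+2\,\length(c)$; as this representative has corners it is not smooth, so its geodesic representative is strictly shorter. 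Thus $\ssys(X)<2\,\ls(X)+2\,\length(c)$, and the whole statement reduces to the single estimate $\length(c)\le 2\,\diam(X)$.

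This last estimate is the heart of the matter and the step I expect to be the main obstacle. I would argue through the midpoint $m$ of the shortest arc $c$. Since $c$ realises $\dist_{X'}(b_1,b_2)$, the triangle inequality forces $\dist_{X'}(m,b_1)=\dist_{X'}(m,b_2)=\tfrac12\length(c)$, and therefore $\dist_{X'}(m,\partial X')=\tfrac12\length(c)$. On the other hand, a shortest path in $X$ from $m$ to the set $\gamma$ touches $\gamma$ only at its endpoint (an interior touch would give a strictly shorter path to $\gamma$), so it lies in $X'$ and realises $\dist_{X'}(m,\partial X')=\dist_X(m,\gamma)\le\diam(X)$. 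Combining the two gives $\tfrac12\length(c)\le\diam(X)$, i.e. $\length(c)\le 2\,\diam(X)$, which closes the argument. The points demanding genuine care are the topological claim that the neighbourhood of $\gamma\cup c$ is really a one-holed torus with connected boundary (here one uses that $c$ joins the two \emph{distinct} sides of $\gamma$) and the identity $\dist_{X'}(m,\partial X')=\dist_X(m,\gamma)$; the length bookkeeping for $\partial T$ and the passage to the geodesic representative are routine by comparison.
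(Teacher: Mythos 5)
Your argument is correct, and it reaches the same bound by a genuinely different route. The paper also reduces to the case of a non-separating systole $\gamma_1$ and also produces the boundary of a one-holed torus neighbourhood as the competitor curve, but it obtains the second generator of that torus from Gromov's theorem on minimal-length homology bases: the basis curves are geodesically convex, hence each has length at most $2\,\diam(X)$, and a basis curve $\gamma_2$ meeting $\gamma_1$ in one point yields the commutator $[\gamma_1,\gamma_2]$ of length less than $2\,\ell(\gamma_1)+2\,\ell(\gamma_2)\le 2\,\ls(X)+4\,\diam(X)$. You instead cut along $\gamma$ and take a shortest arc $c$ between the two boundary copies, bounding $\length(c)\le 2\,\diam(X)$ by the midpoint argument $\tfrac12\length(c)=\dist_{X'}(m,\partial X')=\dist_X(m,\gamma)\le\diam(X)$; the boundary of a regular neighbourhood of $\gamma\cup c$ then traverses each edge of that graph twice, giving the same numerical bound. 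Your version is more self-contained --- it avoids citing Gromov's convexity of minimal homology bases --- at the cost of the (correct but slightly fussier) bookkeeping about cutting, lifting the shortest path from $m$ to $\gamma$ into $X'$, and the case where the two endpoints of $c$ land on the same point of $\gamma$. Both proofs share the same tacit final step, namely that the shortest geodesic in the free homotopy class of an essential separating simple closed curve is itself simple and separating (automatic for hyperbolic metrics, and due to Freedman--Hass--Scott for general Riemannian metrics), and both get strictness from the fact that the competitor curve has corners; neither point is a gap specific to your write-up.
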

\begin{proof}
Recall that the systole of a closed genus $g$ surface is always a simple closed geodesic. If the systole is separating, then the inequality clearly holds, so we suppose that the systole is non-separating, hence non-trivial in homology.

Now consider a minimal length homology basis (for the sum of lengths) of $X$ that contains a systole $\gamma_1$. In \cite[Section 5]{Gromov}, it is shown that such a basis $\mathcal{B}$ always exists and all of its curves are geodesically {\it convex} subsets. By this we mean that for any curve $\gamma$ in the basis, and any pair of points on this curve, the (or a) shortest path between the two points is entirely contained in $\gamma$. It follows that all curves in the basis are of length bounded by $2\, \diam(X)$.

Now let $\gamma_2$ be a curve in $\mathcal{B}$ that intersects $\gamma_1$. Again by convexity and the choice of $\gamma_1$, they intersect in exactly $1$ point, and hence lie in a one-holed torus subsurface of $X$. As a word in the fundamental group, the commutator $[\gamma_1,\gamma_2]$ corresponds to the homotopy class of the boundary of this one-holed torus. The length of its minimal geodesic representative $\delta$ is bounded (strictly) above by $2 \,\ell(\gamma_1) + 2\, \ell(\gamma_2)$. Now using the fact that $\ell(\gamma_2) \leq 2\, \diam(X)$, we have
$$
\ell(\delta) < 2\, \ls(X) + 4 \,\diam(X)
$$
and as $\delta$ is a non-trivial simple separating geodesic, the result follows.
\end{proof}

\begin{rem*}
The observation that $\sys(X) < 2 \diam(X)$ and Lemma \ref{lss<diam} imply that
\[\lss(X)<8\diam(X).\]
Now together with \cite[Part (2) of Theorem 4.10]{Mirz13}, we can deduce that
\[\frac{\E[\lss]}{\log g} \leq C_2\]
for some uniform constant $C_2>0$ independent of $g$.
\end{rem*}

\subsection{A geodesic Cheeger constant} Let $X\in \M_g$ be a hyperbolic surface. Recall that the \emph{Cheeger constant} $h(X)$ of $X$ is defined as
\[h(X)=\inf_{E\subset X_g} \frac{\ell(E)}{\min\left\{ \area(A),\area(B)\right\}}\]
where $E$ runs over all one-dimensional subsets of $X$ which divide $X$ into two disjoint components $A$ and $B$, and $\ell(E)$ is the length of $E$. 

The ``problem" with the Cheeger constant is that the set $E$ is not (necessarily) realized as a geodesic multicurve. For this reason, Mirzakhani \cite{Mirz13} introduced a \emph{geodesic Cheeger constant} $H(X)$ of $X$ defined as
\[H(X):= \inf \limits_{\gamma} \, \frac{\ell_{\gamma}(X)}{\min\left\{ \area(X_1), \area(X_2)\right\}}\]
where $\gamma$ is a multigeodesic on $X$ with $X\setminus \gamma=X_1\cup X_2$, $X_1$ and $X_2$ are connected subsurfaces of $X$ such that $\min\left\{|\chi(X_1)|, |\chi(X_2)|\right\}\geq 1$, and $\ell_{\gamma}(X)$ is the length of $\gamma$ on $X$. 

The Cheeger constant is by definition upper bounded by the geodesic Cheeger constant. Mirzakhani also provided a lower bound in the following proposition.
\begin{proposition}\cite[Proposition 4.7]{Mirz13} \label{pro-H}
Let $X\in \M_g$ be a hyperbolic surface. Then
\[\frac{H(X)}{H(X)+1}\leq h(X)\leq H(X).\]
\end{proposition}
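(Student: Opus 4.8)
The plan is to treat the two inequalities separately; the right-hand inequality $h(X)\le H(X)$ is essentially definitional, while the left-hand inequality is the substantive one. For $h(X)\le H(X)$: any multigeodesic $\gamma$ admissible in the definition of $H(X)$ — with $X\setminus\gamma = X_1\cup X_2$ and $\min\{|\chi(X_1)|,|\chi(X_2)|\}\ge1$ — is in particular a one-dimensional subset $E=\gamma$ dividing $X$ into two components $A=X_1$, $B=X_2$ of positive area, so it is an admissible competitor in the definition of $h(X)$ with exactly the same quotient $\ell_\gamma(X)/\min\{\area(X_1),\area(X_2)\}$. Since the infimum defining $h(X)$ is taken over a strictly larger family of test sets, it can only be smaller, whence $h(X)\le H(X)$.

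For the left-hand inequality, I would first dispose of the trivial regime: since $\frac{H(X)}{H(X)+1}<1$ for every finite $H(X)$, the inequality holds automatically whenever $h(X)\ge1$, so I may assume $h(X)<1$. I would then fix $\eps>0$ and choose a near-minimizer $E$ for $h(X)$, dividing $X$ into $A,B$ with $\area(A)\le\area(B)$ and $\ell(E)\le (h(X)+\eps)\,\area(A)$. Straighten each component of $\partial A=E$ to its geodesic representative, discard null-homotopic components, and let $\gamma$ be the resulting geodesic multicurve. Then $\ell_\gamma(X)\le\ell(E)$, and $\gamma$ separates $X$ into $X_1\cup X_2$ with $X_1$ isotopic to a subsurface carrying $A$. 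Up to handling degenerate topological types of the minimizer (a side that straightens to a disk or an annulus, which must be excluded so that $\gamma$ is a legitimate competitor for $H(X)$), I may take $\min\{|\chi(X_1)|,|\chi(X_2)|\}\ge1$ with $X_1$ the smaller-area side.

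The crux is the area-comparison estimate
\[
\area(A)-\area(X_1)\ \le\ \ell(E).
\]
I would prove it component by component: each boundary curve $E_i$ of $E$ and its geodesic representative $\gamma_i$ cobound an embedded annulus $R_i$, and these annuli are disjoint and account for the symmetric difference between $A$ and $X_1$. Applying Gauss–Bonnet to $R_i$ (Euler characteristic $0$, the geodesic boundary $\gamma_i$ contributing nothing, the other boundary $E_i$ contributing its total geodesic curvature and turning), one finds $\area(R_i)=\int_{E_i}\kappa_g\,ds$; using that the Cheeger minimizer has boundary of constant geodesic curvature equal to $h(X)<1$, this yields $\area(R_i)\le \ell(E_i)$, and summing bounds the area of the symmetric difference by $\ell(E)$. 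Granting the estimate, since $X_1$ is the smaller side I would conclude
\[
H(X)\le\frac{\ell_\gamma(X)}{\area(X_1)}\le\frac{\ell(E)}{\area(A)-\ell(E)}\le\frac{(h(X)+\eps)\,\area(A)}{(1-h(X)-\eps)\,\area(A)}=\frac{h(X)+\eps}{1-h(X)-\eps},
\]
and letting $\eps\to0$ and rearranging gives $\frac{H(X)}{H(X)+1}\le h(X)$.

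The main obstacle is the area-comparison estimate, and within it the regularity and curvature information about the Cheeger minimizer. Establishing that a minimizer exists and that its boundary is a (piecewise-)smooth curve of constant geodesic curvature $h(X)$ is a point of geometric measure theory that must be invoked carefully; one also has to control the turning at any corners and verify the sign in the Gauss–Bonnet computation so that each annulus genuinely has area bounded by (a factor $\le1$ of) its boundary length. The secondary obstacle is the topological bookkeeping after straightening — ensuring that no complementary piece degenerates — so that $\gamma$ is an honest test multicurve for $H(X)$ and $X_1$ is indeed the smaller-area side.
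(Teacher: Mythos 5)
The paper does not prove this proposition at all---it is quoted directly from \cite[Proposition 4.7]{Mirz13}---so the only meaningful comparison is with Mirzakhani's own argument, whose architecture you have essentially reproduced: $h(X)\le H(X)$ is definitional, and for the reverse direction one straightens the boundary of a Cheeger competitor into a multigeodesic, shows each side loses area at most $\ell(E)$, and runs the algebra $H(X)\le \ell(E)/(\area(A)-\ell(E))\le (h+\eps)/(1-h-\eps)$, which rearranges to $\frac{H}{H+1}\le h$. That skeleton, the reduction to the case $h(X)<1$, and the final computation are all correct.

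The gap is in your justification of the area-comparison estimate, which is the whole content of the proposition. You (sensibly) work with a \emph{near}-minimizer $E$, but then bound $\area(R_i)$ by Gauss--Bonnet using the assertion that ``the Cheeger minimizer has boundary of constant geodesic curvature $h(X)<1$.'' A near-minimizer has no such property---its geodesic curvature can be arbitrarily large---so the bound $\int_{E_i}\kappa_g\,ds\le\ell(E_i)$ fails for the curve you are actually integrating over; and if you instead insist on a true minimizer you must import existence and regularity from geometric measure theory, which you acknowledge but do not supply. Neither is needed. The correct input is an elementary hyperbolic fact valid for an \emph{arbitrary} closed curve $c$: if $c$ is freely homotopic to a geodesic $c^*$, the annulus they cobound has area at most $\ell(c)$ (in Fermi coordinates in the annular cover, the region under the graph $\rho=\rho(t)$ over the core has area $\int\sinh\rho\,dt\le\int\cosh\rho\,dt\le\ell(c)$), while if $c$ is null-homotopic the disk it bounds has area less than $\ell(c)$ by the hyperbolic isoperimetric inequality $\ell^2\ge A^2+4\pi A$. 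This single statement also closes the second hole you flag but do not resolve: discarding a null-homotopic component of $E$ costs area at most its length, and if a side $A$ is a disk or annulus these same facts force $\area(A)\le\ell(E)$, so that competitor has quotient at least $1>h(X)$ and cannot occur in the regime $h(X)<1$; hence $\gamma$ is a legitimate competitor for $H(X)$ with $\min\{\area(X_1),\area(X_2)\}\ge\min\{\area(A),\area(B)\}-\ell(E)$. One further bookkeeping point deserves a sentence in a complete write-up: the definition of $H(X)$ requires the two complementary pieces to be connected, whereas a Cheeger competitor need not have connected sides, so one must first pass to a suitable connected component (or grouping of components) before straightening.
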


\subsection{The Weil-Petersson metric}
Associated to a pants decomposition of $\Sigma_g$, the \emph{Fenchel-Nielsen coordinates}, given by $X \mapsto (\ell_{\alpha_i}(X),\tau_{\alpha_i}(X))_{i=1}^{3g-3}$, are global coordinates for the Teichm\"uller space $\sT_{g}$ of $\Sigma_g$. Where $\{\alpha_i\}_{i=1}^{3g-3}$ are disjoint simple closed geodesics and $\tau_{\alpha_i}$ is the twist along $\alpha_i$ (measured in length). Wolpert in \cite{Wolpert82} showed that the \wep \ sympletic structure has a natural form in Fenchel-Nielsen coordinates:
\bt[Wolpert]\label{wol-wp}
The \wep \ sympletic form $\omega_{\WP}$ on $\sT_{g}$ is given by
\[\omega_{\WP}=\sum_{i=1}^{3g-3}d\ell_{\alpha_i}\wedge d\tau_{\alpha_i}.\]
\et

We mainly work with the \emph{Weil-Petersson volume form}
$$
\dvol_{\WP}:=\tfrac{1}{(3g-3)!}\underbrace{\omega_{\WP}\wedge\cdots\wedge\omega_{\WP}}_{\text{$3g-3$ copies}}~.
$$
It is a mapping class group invariant measure on $\T_{g}$, hence is the lift of a measure on $\M_{g}$, which we also denote by $\dvol_{\WP}$. The total volume of $\M_{g}$ is finite (\emph{e.g.,} this finiteness can be obtained by using Theorem \ref{wol-wp} and the upper bound for Bers' constant \cite{Buser10}), and we denote it by $V_{g}$.

Following \cite{Mirz13}, we view a quantity $f:\M_g\to\R$ as a random variable on $\M_g$ with respect to the probability measure $\Prob$ defined by normalizing $\dvol_{\WP}$, and let $\E[f]$ denote it expectation or expected value. Namely,
$$
\Prob(\mathcal{A}):=\frac{1}{V_g}\int_{\M_g}\mathbf{1}_{\mathcal{A}}dX,\quad \E[f]:=\frac{1}{V_g}\int_{\M_g}f(X)dX,
$$
where $\mathcal{A}\subset\M_g$ is any Borel subset, $\mathbf{1}_\mathcal{A}:\M_g\to\{0,1\}$ is its characteristic function, and where $dX$ is short for $\dvol_{\WP}(X)$.

\section{Integral of $\frac{1}{h}$}\label{sec-1/h}
The main result of this section is to show that the integral of one over the Cheeger constant over a small set $\mathcal{A}_g\subset \M_g$ is also small (Proposition \ref{h-small}). In next section, we will apply Proposition \ref{h-small} to show Theorem \ref{mt-1}. 

First we recall the following well-known fact.  
\begin{lemma}\label{l-fub}
Let $(X,\mu)$ be a measure space and $f\geq 0$ on $X$ with $f\in L^1(X,\mu)$. Let $E\subset X$ be a measurable subset. Then 
$$ \int_E f(x) d\mu(x) = \int_0^\infty \mu\big(\{f(x)\geq t\}\cap E\big) dt .$$
\end{lemma}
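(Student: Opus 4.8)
The plan is to reduce the identity to a single application of Tonelli's theorem via the ``layer-cake'' representation of a nonnegative function. The starting observation is the pointwise identity
\[
f(x)=\int_0^{f(x)}dt=\int_0^\infty \mathbf{1}_{\{f(x)\geq t\}}\,dt,
\]
valid for every $x$ with $f(x)\in[0,\infty)$, which simply rewrites the value $f(x)$ as the one-dimensional Lebesgue measure of the interval $[0,f(x)]$.

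First I would substitute this into the left-hand side, obtaining
\[
\int_E f(x)\,d\mu(x)=\int_X\int_0^\infty \mathbf{1}_{\{f(x)\geq t\}}\,\mathbf{1}_E(x)\,dt\,d\mu(x).
\]
The key step is then to interchange the order of integration. To set this up rigorously, I would consider the product measure space $(X\times[0,\infty),\,\mu\otimes\lambda)$, where $\lambda$ denotes Lebesgue measure on $[0,\infty)$, together with the integrand
\[
F(x,t):=\mathbf{1}_{\{f(x)\geq t\}}\,\mathbf{1}_E(x).
\]
Since $f$ is measurable and $E$ is measurable, the ``subgraph'' set $\{(x,t):f(x)\geq t\}$ intersected with $E\times[0,\infty)$ belongs to the product $\sigma$-algebra, so $F$ is a nonnegative measurable function on the product space. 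Tonelli's theorem then applies and permits swapping the two integrals without any finiteness hypothesis.

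After swapping I would evaluate the inner integral for each fixed $t$, namely
\[
\int_X \mathbf{1}_{\{f(x)\geq t\}}\,\mathbf{1}_E(x)\,d\mu(x)=\mu\big(\{f(x)\geq t\}\cap E\big),
\]
which is precisely the right-hand side. I do not expect a genuine obstacle, as this is a standard fact; the only point needing a line of care is the measurability of the subgraph set in the product $\sigma$-algebra, which is exactly what legitimizes the use of Tonelli. Finally I would remark that the hypothesis $f\in L^1(X,\mu)$ is not actually required for the identity itself---both sides are well defined in $[0,\infty]$ and are equal by the above argument---but it ensures that their common value is finite, which is all that is needed for the applications in the next section.
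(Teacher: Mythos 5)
Your proposal is correct and follows essentially the same route as the paper: the layer-cake identity $f(x)=\int_0^\infty \mathbf{1}_{\{f(x)\geq t\}}\,dt$ followed by an interchange of integrals via Tonelli/Fubini. The paper's proof is just the three-line computation you describe; your added remarks on product-measurability of the subgraph and on the dispensability of the $L^1$ hypothesis are accurate but not needed.
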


\begin{proof}
We provide a proof for clarity. It essentially follows from Fubini's theorem:
\begin{eqnarray*}
	\int_E f(x) d\mu(x) 
	& = & \int_E \int_0^\infty \mathbf 1_{\{f(x)\geq t\}} dt d\mu(x) \\
	& = & \int_0^\infty \int_E \mathbf 1_{\{f(x)\geq t\}} d\mu(x) dt \\
	& = & \int_0^\infty \mu\big(\{f(x)\geq t\}\cap E\big) dt
\end{eqnarray*}	
which completes the proof.
\end{proof}
We now show:
\begin{proposition}\label{H-small}
Let $\sA \sbs \M_g$ be a measurable subset. Then there exists a uniform constant $c>0$ independent of $g$ and $\sA$ such that
\begin{equation*}
\frac{1}{V_g}\int_\sA \frac{1}{H(X)} dX \leq c\cdot \left(\frac{\Vol(\sA)}{V_g} + \frac{1}{g}\right).
\end{equation*}
\end{proposition}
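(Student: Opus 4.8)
The plan is to bound the integral by a tail/layer-cake argument using Lemma \ref{l-fub}, combined with lower bounds on the Weil--Petersson volume of the set where the geodesic Cheeger constant $H(X)$ is small. Since we want to bound $\int_\sA \frac{1}{H(X)}\,dX$ and $1/H$ is large exactly where $H$ is small, I would apply Lemma \ref{l-fub} to the nonnegative function $f(X)=1/H(X)$ on the set $\sA$:
\[
\frac{1}{V_g}\int_\sA \frac{1}{H(X)}\,dX = \frac{1}{V_g}\int_0^\infty \Vol\big(\{X: 1/H(X)\geq t\}\cap \sA\big)\,dt.
\]
Now $1/H(X)\geq t$ is the event $H(X)\leq 1/t$, so everything hinges on a good upper bound for the volume of the sublevel set $\{H(X)\leq \eps\}$ as $\eps\to 0$.

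**The key input** should be a probability estimate showing that $H(X)$ being small is rare. Geometrically, $H(X)\leq\eps$ means there is a separating multigeodesic $\gamma$ of length at most $\eps\cdot\min\{\area(X_1),\area(X_2)\}$ cutting $X$ into pieces each of Euler characteristic at least $1$ in absolute value. Since areas are $2\pi$ times absolute Euler characteristic, a small $H$ forces a relatively short multicurve separating off a topologically nontrivial piece. I would invoke the counting/integration machinery from \cite{Mirz13} and \cite{NWX20} — specifically Mirzakhani's integration formula together with the volume asymptotics $V_{g-1}/V_g = O(1/g)$ type estimates — to show that
\[
\frac{\Vol\big(\{X:H(X)\leq \eps\}\big)}{V_g} \leq c'\cdot\left(\eps + \frac{1}{g}\right)
\]
for small $\eps$, or more precisely an estimate of the form $c'(\eps\cdot\text{(volume factor)})$ capturing that short separating multicurves are exponentially or polynomially rare relative to the genus. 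For the range where $t$ is small (so $1/t=\eps$ is large, i.e. $H$ is large), I would simply bound the layer-cake integrand by $\Vol(\sA)$, contributing the $\Vol(\sA)/V_g$ term. For the range where $t$ is large ($\eps$ small), the sublevel-set estimate kicks in and the integral $\int \eps\,d(1/\eps)$ type contribution is controlled, producing the $1/g$ term after integrating against $dt$.

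**Concretely**, I would split the layer-cake integral at a threshold $t_0$ (a fixed constant or mild function of $g$): for $t\leq t_0$ use $\Vol(\{f\geq t\}\cap\sA)\leq \Vol(\sA)$ to get a contribution $\leq t_0\cdot\Vol(\sA)/V_g$, and for $t> t_0$ use the rarity estimate on $\{H(X)\leq 1/t\}$, ignoring the intersection with $\sA$, to get a contribution bounded by $\frac{1}{V_g}\int_{t_0}^\infty \Vol(\{H\leq 1/t\})\,dt$. With the sublevel bound of the form $\Vol(\{H\leq \eps\})/V_g \lesssim \eps + 1/g$ (valid for $\eps$ below some absolute constant), the substitution $\eps=1/t$ gives $\int_{t_0}^\infty (1/t + 1/g)\,dt$, which requires the volume bound to actually decay fast enough in $t$ — a pure $1/t$ bound would not be integrable, so I expect the real estimate to be stronger, roughly $\Vol(\{H\leq\eps\})/V_g \lesssim \eps^k + 1/g$ for some exponent making the tail integrable, or an exponential-type decay coming from the collar lemma forcing long crossing geodesics.

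**The main obstacle** I anticipate is establishing the sharp volume estimate for the sublevel set $\{H(X)\leq\eps\}$ with the correct dependence on both $\eps$ and $g$, and in particular securing integrability of the tail: the naive $H\leq\eps$ counting over all topological types of separating multicurves (varying over all ways to split $2g-2$ among the two pieces) must be summed, and controlling this sum uniformly in $g$ via the Weil--Petersson volume polynomial estimates is the delicate quantitative heart of the argument. I would expect to lean on Proposition \ref{pro-H} only indirectly here (it relates $h$ and $H$ but the Proposition \ref{H-small} statement is purely about $H$), and instead the work is in the Mirzakhani-style integration producing, for each topological type, a factor that both decays in the length of the separating curve and is suppressed by the ratio of lower-genus volumes to $V_g$.
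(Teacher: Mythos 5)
Your overall strategy --- layer-cake via Lemma \ref{l-fub} applied to $1/H$, splitting the $t$-integral at a threshold $t_0$, bounding the small-$t$ range by $t_0\,\Vol(\sA)$ and the large-$t$ range by the volume of the full sublevel set $\{H(X)\leq 1/t\}$ --- is exactly the paper's. (The paper makes an adaptive choice of $t_0$, namely the first time with $\Vol(\{H\leq 1/t_0\})=\Vol(\sA)$, and then runs two cases; your simpler fixed threshold $t_0=1/\eps_0$ would work just as well, and arguably more cleanly, once the correct key input is in place.)

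The genuine gap is in that key input. You posit a sublevel bound of the additive form $\Vol(\{H\leq\eps\})/V_g\lesssim \eps^k+1/g$, and you correctly observe that $k=1$ is not integrable in $t=1/\eps$; but even with $k$ large the additive $1/g$ term is fatal, since $\int_{t_0}^\infty \frac{1}{g}\,dt$ diverges, so the bound in the form you propose cannot close the argument. What the proof actually needs --- and what the paper takes from \cite[Equation (4.20)]{Mirz13} --- is the \emph{multiplicative} bound $\Vol(\{H(X)\leq\eps\})\leq c_1\eps^2 V_g/g$ for all $\eps<\eps_0$, with $\eps_0$ and $c_1$ uniform in $g$. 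With this, the tail contributes $\int_{1/\eps_0}^\infty c_1 t^{-2}V_g/g\,dt = c_1\eps_0 V_g/g$ and the head contributes $\eps_0^{-1}\Vol(\sA)$, giving the stated inequality with $c=\max\{2/\eps_0,\,2c_1\eps_0\}$. So the architecture of your argument is sound and matches the paper, but the quantitative heart is mis-stated in a way that would break the proof as written; your instinct to derive the sublevel estimate from Mirzakhani's integration formula is the right direction, though the paper simply cites it.
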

	
\begin{proof}
First one may assume that $\Vol(\sA)>0$, otherwise there is nothing to prove. Now since $\Vol\big(\{H(X)\leq s\}\big)$ is a continuous increasing function with respect to $s$, there is a first time $t_0>0$ such that $\Vol\big(\{H(X)\leq \frac{1}{t_0}\} \big) = \Vol(\sA)$. (Note that $t_0$ may depend on $g$.) By Lemma \ref{l-fub} we have
\begin{eqnarray*}
&&\int_\sA \frac{1}{H(X)} dX = \int_0^\infty \Vol\big(\{H(X)\leq \frac{1}{t}\} \cap \sA\big) dt\\
&&= \int_0^{t_0} \Vol\big(\{H(X)\leq \frac{1}{t}\} \cap \sA\big) dt+\int_{t_0}^{\infty} \Vol\big(\{H(X)\leq \frac{1}{t}\} \cap \sA\big) dt \\
&&\leq\int_{t_0}^\infty \Vol\big(\{H(X)\leq \frac{1}{t}\}\big) dt+ t_0 \Vol(\sA).
\end{eqnarray*}

Now we use an estimate of Mirzakhani \cite[Equation (4.20)]{Mirz13} which says that there exists a uniform constant $\eps_0>0$ independent of $g$ such that for any $\eps<\eps_0$ we have
\begin{equation*}
\Vol(\{H(X)\leq \eps\}) \leq c_1\frac{\eps^2 V_g}{g}
\end{equation*}
for some uniform constant $c_1>0$ again independent of $g$. There are two cases.\\

\noindent\underline{Case 1:} Suppose that $\eps_0 \geq \frac{1}{t_0}$. 
We have
\begin{equation*}
\Vol(\sA) = \Vol\big(\{H(X)\leq \frac{1}{t_0}\} \big) \leq c_1\frac{1}{t_0^2}\frac{V_g}{g}
\end{equation*}
which implies that
\begin{eqnarray*}
	\int_\sA \frac{1}{H(X)} dX 
	&\leq& \int_{t_0}^\infty \Vol\big(\{H(X)\leq \frac{1}{t}\}\big) dt + t_0 \Vol(\sA) \\
	&\leq& \int_{t_0}^\infty c_1\frac{1}{t^2} \frac{V_g}{g} dt + c_1\frac{1}{t_0}\frac{V_g}{g} \\
	&=& 2c_1\frac{1}{t_0}\frac{V_g}{g} \\
	&\leq& 2c_1\eps_0 \frac{V_g}{g}.
\end{eqnarray*}

\noindent\underline{Case 2:} Suppose that $\eps_0 <\frac{1}{t_0}$.

In this case
\begin{eqnarray*}
	\int_\sA \frac{1}{H(X)} dX 
	&\leq& \int_{t_0}^\infty \Vol\big(\{H(X)\leq \frac{1}{t}\}\big) dt + t_0 \Vol(\sA) \\
	&\leq& \int_{t_0}^\frac{1}{\eps_0} \Vol\big(\{H(X)\leq \frac{1}{t}\}\big) dt + \int_{\frac{1}{\eps_0}}^\infty c_1\frac{1}{t^2} \frac{V_g}{g} dt + t_0 \Vol(\sA) \\
	&\leq& (\frac{1}{\eps_0} - t_0) \Vol\big(\{H(X)\leq \frac{1}{t_0}\}\big) + c_1\eps_0\frac{V_g}{g} + t_0 \Vol(\sA) \\
	&=& \frac{1}{\eps_0} \Vol(\sA) + c_1\eps_0\frac{V_g}{g}.
\end{eqnarray*}
We now choose
\[c=\max\left\{\frac{2}{\eps_0},2c_1\eps_0\right\}\]
and the conclusion follows.
\end{proof}

The following consequence of Proposition \ref{H-small} will be applied later:
\begin{proposition}\label{h-small}
Let $\sA_g \sbs \M_g$ be a measurable subset satisfying
\begin{equation*}
\limg \frac{\Vol(\sA_g)}{V_g} =0.
\end{equation*}
Then we have
\begin{equation*}
\limg \frac{1}{V_g}\int_{\sA_g} \frac{1}{h(X)} dX  =0.
\end{equation*}
\end{proposition}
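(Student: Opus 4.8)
The plan is to reduce everything to Proposition \ref{H-small} via the comparison between $h(X)$ and $H(X)$ supplied by Proposition \ref{pro-H}. The only genuinely new ingredient needed is the elementary observation that a lower bound on $h$ in terms of $H$ translates into an upper bound on $1/h$ in terms of $1/H$.

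First I would record the pointwise inequality. Proposition \ref{pro-H} gives $\frac{H(X)}{H(X)+1}\leq h(X)$, and since both sides are positive, taking reciprocals yields
\[
\frac{1}{h(X)} \leq \frac{H(X)+1}{H(X)} = 1 + \frac{1}{H(X)}
\]
for every $X\in\M_g$. Integrating this over $\sA_g$ and dividing by $V_g$ gives
\[
\frac{1}{V_g}\int_{\sA_g} \frac{1}{h(X)}\,dX \leq \frac{\Vol(\sA_g)}{V_g} + \frac{1}{V_g}\int_{\sA_g}\frac{1}{H(X)}\,dX.
\]

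Next I would apply Proposition \ref{H-small} to the last term, with $\sA = \sA_g$, to bound it by $c\bigl(\tfrac{\Vol(\sA_g)}{V_g} + \tfrac{1}{g}\bigr)$ for the uniform constant $c>0$. Combining the two displays gives
\[
\frac{1}{V_g}\int_{\sA_g} \frac{1}{h(X)}\,dX \leq (1+c)\,\frac{\Vol(\sA_g)}{V_g} + \frac{c}{g}.
\]
Finally, letting $g\to\infty$, the first term tends to $0$ by the hypothesis $\Vol(\sA_g)/V_g\to 0$ and the second tends to $0$ trivially, so the left-hand side tends to $0$, which is the claim.

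There is no real obstacle here: the statement is essentially a packaging of Proposition \ref{H-small} once one passes from $H$ to $h$, and the constant $c$ being independent of $g$ and of the subset is exactly what makes the limit work uniformly. The one point requiring a line of care is justifying the reciprocal inequality, i.e. that $h(X)>0$ so that $\frac{H(X)}{H(X)+1}\leq h(X)$ may be inverted; this is immediate since $X$ is a closed hyperbolic surface and hence has positive Cheeger constant.
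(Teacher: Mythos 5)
Your proof is correct and follows essentially the same route as the paper: both use Proposition \ref{pro-H} to deduce the pointwise bound $\frac{1}{h(X)}\leq 1+\frac{1}{H(X)}$ and then apply Proposition \ref{H-small} together with the hypothesis $\Vol(\sA_g)/V_g\to 0$. Your write-up is in fact slightly more explicit than the paper's about the constants and the positivity of $h$, but there is no substantive difference.
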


\begin{proof}
Recall that \cite[Proposition 4.7]{Mirz13} tells that
	\begin{equation*}
	\frac{H(X)}{H(X)+1} \leq h(X) \leq H(X)
	\end{equation*}
which together with Proposition \ref{H-small} imply that
\begin{eqnarray*}
\limg \frac{1}{V_g}\int_{\sA_g} \frac{1}{h(X)} dX&\leq& \limg \frac{1}{V_g}\int_{\sA_g}\left(1+ \frac{1}{H(X)}\right) dX \\
&=&0.
\end{eqnarray*}

The proof is complete.
\end{proof}

The following result is contained in the proof of \cite[Theorem 4.10]{Mirz13}. For completeness we outline a proof here.
\begin{proposition}\label{l1-h}
Let $\ell_1(X)=\min\{\ell_{sys}(X),1\}$. Then there exists a uniform constant $c>0$ independent of $g$ such that
\[\frac{\int_{\sM_g} \frac{|\log (\ell_1(X))|}{h(X)}dX}{V_g}\leq c.\]
\end{proposition}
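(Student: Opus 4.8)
The plan is to bound the integral by splitting the moduli space according to the size of the systole, and to control the contribution of surfaces with small systole using the estimate already established in Proposition \ref{H-small} (via Proposition \ref{h-small}'s mechanism). First I would observe that on the region where $\ell_{sys}(X)\geq 1$ we have $\ell_1(X)=1$, so $\log(\ell_1(X))=0$ and the integrand vanishes identically. Hence the entire integral is supported on the thin part $\M_g^{<1}:=\{X:\ell_{sys}(X)<1\}$, where $\ell_1(X)=\ell_{sys}(X)$ and the integrand equals $\frac{|\log\ell_{sys}(X)|}{h(X)}=\frac{-\log\ell_{sys}(X)}{h(X)}$.

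The key idea is to handle the two unbounded factors, $|\log\ell_{sys}(X)|$ and $\frac{1}{h(X)}$, by a dyadic decomposition of the thin part. For each integer $k\geq 0$ I would set
\[
\sA_k:=\Big\{X\in\M_g\,:\,e^{-(k+1)}\leq \ell_{sys}(X)< e^{-k}\Big\},
\]
so that $\M_g^{<1}=\bigsqcup_{k\geq 0}\sA_k$ and on $\sA_k$ one has $|\log\ell_{sys}(X)|\leq k+1$. This converts the logarithmic weight into the controllable factor $k+1$ on each piece. The integral then satisfies
\[
\int_{\M_g}\frac{|\log\ell_1(X)|}{h(X)}dX
\leq \sum_{k\geq 0}(k+1)\int_{\sA_k}\frac{1}{h(X)}dX
\leq \sum_{k\geq 0}(k+1)\int_{\sA_k}\Big(1+\frac{1}{H(X)}\Big)dX,
\]
where the last step uses the lower bound $h(X)\geq \frac{H(X)}{H(X)+1}$ from Proposition \ref{pro-H}, giving $\frac{1}{h(X)}\leq 1+\frac{1}{H(X)}$.

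To estimate each $\int_{\sA_k}\frac{1}{H(X)}dX$ I would apply Proposition \ref{H-small} with $\sA=\sA_k$, which yields $\frac{1}{V_g}\int_{\sA_k}\frac{1}{H(X)}dX\leq c\big(\frac{\Vol(\sA_k)}{V_g}+\frac{1}{g}\big)$. The decisive input is a quantitative bound on $\Vol(\sA_k)$: since $\sA_k$ is contained in $\{X:\ell_{sys}(X)<e^{-k}\}$, Mirzakhani's estimate for the volume of the thin part (the same type of bound quoted as \cite[Equation (4.20)]{Mirz13} but for the systole rather than the Cheeger constant, namely $\Vol(\{\ell_{sys}(X)<\eps\})\leq c'\,\eps^2 V_g$ for small $\eps$) gives $\frac{\Vol(\sA_k)}{V_g}\leq c'\,e^{-2k}$. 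Combining these, the $k$-th term contributes $O\big((k+1)(e^{-2k}+\frac{1}{g})\big)$, and the geometric decay $\sum_k(k+1)e^{-2k}<\infty$ makes the sum over the finitely-many relevant $k$ converge to a uniform constant. I expect the main obstacle to be the $\frac{1}{g}$ terms: naively summing $\frac{1}{g}$ over all $k\geq 0$ diverges, so one must cut off the dyadic sum at $k\approx \log g$ (using that $\ell_{sys}(X)$ is bounded below off a set of negligible volume, or simply that the systole is almost surely not superexponentially small, so that $\sA_k$ has volume $o(V_g)$ and in fact is empty for $k$ beyond a threshold comparable to $\log g$). Then $\sum_{k\lesssim \log g}(k+1)\frac{1}{g}=O\big(\frac{(\log g)^2}{g}\big)=o(1)$, and the geometric part contributes a genus-independent constant. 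Putting the two pieces together bounds the whole expression by a uniform constant $c$, completing the proof.
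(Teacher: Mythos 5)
Your reduction to the thin part, the dyadic decomposition, the bound $\frac{1}{h(X)}\leq 1+\frac{1}{H(X)}$, and the use of $\Vol(\{\ell_{\sys}<\eps\})=O(\eps^2 V_g)$ are all fine, and you correctly identified the weak point yourself: the $\frac{1}{g}$ terms coming from applying Proposition \ref{H-small} once per dyadic shell. But the fix you propose does not work. The set $\sA_k=\{e^{-(k+1)}\leq \ell_{\sys}(X)<e^{-k}\}$ is \emph{not} empty for $k$ beyond any threshold: $\M_g$ is non-compact precisely because the systole has no positive lower bound, and every thin part $\{\ell_{\sys}<\eps\}$ has positive Weil--Petersson volume. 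So the dyadic sum genuinely runs over all $k\geq 0$, the terms $(k+1)\cdot\frac{1}{g}$ sum to $+\infty$, and the argument as written does not close. The underlying issue is that Proposition \ref{H-small} gives an \emph{additive} bound $c\left(\frac{\Vol(\sA)}{V_g}+\frac{1}{g}\right)$ whose second term does not shrink with $\Vol(\sA)$, so it cannot be applied infinitely many times. The approach is salvageable, but only by replacing that additive bound with something that decays with $\Vol(\sA_k)$: for instance, redo the layer-cake computation on each $\sA_k$ using $\Vol(\{H\leq 1/t\}\cap\sA_k)\leq\min\left\{\Vol(\sA_k),\,c_1\frac{V_g}{t^2g}\right\}$ and optimize the crossover point, which yields roughly $\int_{\sA_k}\frac{1}{H}dX\lesssim \frac{e^{-k}}{\sqrt{g}}V_g$ for large $k$ and makes the weighted sum converge; alternatively, use that $1/H$ lies in $L^{4/3}$ with norm $O(V_g^{3/4})$ and apply H\"older on each shell. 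Either repair requires an input you did not supply.

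For comparison, the paper's proof avoids any decomposition: it quotes Mirzakhani's bound $\int_{\M_g}h(X)^{-4/3}dX=O(V_g)$ (\cite[Theorem 4.8]{Mirz13}) and the bound $\int_{\M_g}\ell_{\sys}(X)^{-1}dX=O(V_g)$ (\cite[Corollary 4.3]{Mirz13}), the latter giving $\int_{\M_g}|\log\ell_1(X)|^{4}dX=O(V_g)$ since $|\log t|^4\lesssim 1/t$ on $(0,1)$, and then concludes in one line by H\"older's inequality with exponents $\frac{4}{3}$ and $4$. That route trades your elementary shell-counting for a single integrability statement about $1/h$ that is strictly stronger than the $L^1$-type control provided by Proposition \ref{H-small}, and it is exactly that extra integrability that your argument is missing.
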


\bp
First we recall two results from \cite{Mirz13}. By setting $\beta=\frac{4}{3}$ and applying \cite[Theorem 4.8]{Mirz13}, we have that there exist two uniform constants $C_1,C_2>0$ independent of $g$ such that
\begin{equation}\label{1-h-v}
C_1 V_g \leq \int_{\sM_g} \frac{1}{h(X)^{\frac{4}{3}}}dX \leq C_2 V_g.
\end{equation}
By \cite[Corollary 4.3]{Mirz13} we know that there exist two uniform constants $C_3,C_4>0$ independent of $g$ such that
\begin{equation}\label{1-sys-v}
C_3 V_g\leq \int_{\sM_g} \frac{1}{\ell_{sys}(X)}dX\leq C_4 V_g \nonumber
\end{equation}
which in particular implies that for any $a>0$
\begin{equation}\label{1-sys-v-2}
\int_{\sM_g} |\log (\ell_1(X))|^a dX=O(V_g)
\end{equation}
where the implied constant is independent of $g$.

The rough equalities \eqref{1-h-v} and \eqref{1-sys-v-2} together with H\"older's inequality imply that 
\begin{eqnarray*}
\int_{\sM_g} \frac{|\log (\ell_1(X))|}{h(X)}dX &\leq& \left( \int_{\sM_g} \frac{1}{h(X)^{\frac{4}{3}}}dX \right)^{\frac{3}{4}}\cdot \left( \int_{\sM_g} |\log (\ell_1(X))|^4 dX\right)^{\frac{1}{4}}\\
&=&O(V_g)
\end{eqnarray*}
as required.
\ep

\section{Proof of Theorem \ref{mt-1}}\label{Sec-mt-1}
In this section we complete the proof of our main result, Theorem \ref{mt-1}.

We recall two prior results essential for our purposes. The first one is:
\begin{thm*}\cite[Theorem 4.4]{Mirz13}
Let $0<a<2$. Then
\[\Prob\left(X\in \M_g \mid \ \lss(X)<a \log g \right)=O\left( \frac{(\log g)^3 g^{\frac{a}{2}}}{g} \right).\]
\end{thm*}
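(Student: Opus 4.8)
The plan is to bound the probability by a first--moment (expected count) estimate and then to evaluate that count via Mirzakhani's integration formula together with her large--genus volume asymptotics. Set $L=a\log g$ and let $N(X,L)$ denote the number of simple closed geodesics on $X$ that are \emph{separating} and have length $<L$. Since $N(\cdot,L)$ is nonnegative and integer valued and $\{\lss(X)<L\}=\{N(X,L)\ge 1\}$, Markov's inequality gives
\[
\Prob\big(\lss(X)<L\big)=\Prob\big(N(X,L)\ge 1\big)\le \E[N(\cdot,L)].
\]
So it suffices to show $\E[N(\cdot,L)]=O\big((\log g)^3 g^{a/2}/g\big)$.

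Next I would organize the count by topological type. A separating simple closed curve on $\Sigma_g$ cuts it into two pieces of genera $i$ and $g-i$ with $1\le i\le\lfloor g/2\rfloor$, and the mapping class group acts transitively on curves of each fixed type. Writing $V_{i,1}(t)$ for the \wep\ volume of the moduli space of genus-$i$ surfaces with one boundary geodesic of length $t$, Mirzakhani's integration formula expresses the contribution of each type as an integral, so that
\[
\E[N(\cdot,L)]=\frac{1}{V_g}\sum_{i=1}^{\lfloor g/2\rfloor}\frac{1}{m_i}\int_0^{L} t\,V_{i,1}(t)\,V_{g-i,1}(t)\,dt,
\]
where $m_i\in\{1,2\}$ is the obvious symmetry factor (equal to $2$ only when $g$ is even and $i=g/2$).

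The analytic heart is estimating each summand, for which I would feed in three volume inputs. First, the exponential bound $V_{g-i,1}(t)\le e^{t/2}V_{g-i,1}$, which turns the large-genus factor into $e^{t/2}$ times a constant. Second, for fixed small $i$ the factor $V_{i,1}(t)$ is a fixed polynomial in $t$ of degree $6i-4$; in particular $V_{1,1}(t)=\tfrac{1}{24}(t^2+4\pi^2)$ (in Mirzakhani's normalization) is quadratic. Third, the Mirzakhani--Zograf asymptotics give the ratio estimate $V_{g-i,1}/V_g\asymp g^{-(2i-1)}$ (up to an explicit power of $4\pi^2$), so that $V_{g-1,1}/V_g=O(1/g)$ and larger $i$ carry extra powers of $g^{-1}$. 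Combining these, the $i=1$ term is
\[
\frac{V_{g-1,1}}{V_g}\int_0^{L} t\cdot\tfrac{1}{24}(t^2+4\pi^2)\,e^{t/2}\,dt\;\asymp\;\frac{1}{g}\cdot L^3 e^{L/2}\;=\;\frac{(a\log g)^3\,g^{a/2}}{g}\cdot O(1),
\]
the $(\log g)^3$ being produced by integrating $t\cdot t^2\cdot e^{t/2}$ up to $L$ and $g^{a/2}=e^{L/2}$. For $i\ge 2$ the same bounds give a summand of size $O\big((\log g)^{6i-3}g^{a/2}/g^{2i-1}\big)$, smaller than the $i=1$ term by a factor tending to $0$; hence the series is dominated by its first term and the stated estimate follows, the hypothesis $a<2$ ensuring $g^{a/2}/g\to 0$.

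The main obstacle is the volume bookkeeping rather than any conceptual difficulty: one must invoke the precise large-genus behavior of $V_{g,1}/V_g$ to control the ratios $V_{g-i,1}/V_g$ uniformly in $i$, verify that the sum over $i$ is genuinely dominated by the genus-$1$ split, and track the symmetry factors as well as which exponential-versus-polynomial bound is applied to which factor. Producing exactly the exponent $(\log g)^3$ hinges on using the clean bound $V_{g-i,1}(t)\le e^{t/2}V_{g-i,1}$ for the large side together with the degree-$2$ polynomial $V_{1,1}(t)$ for the small side, so the care lies precisely in this choice of estimates.
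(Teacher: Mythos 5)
The statement you are proving is not proved in this paper at all: it is quoted verbatim as \cite[Theorem 4.4]{Mirz13}, and the authors use it as a black box (they even thank McMullen and Wright for correspondence about its proof). So the right comparison is with Mirzakhani's original argument, and your proposal is essentially a faithful reconstruction of it: first moment via Markov, Mirzakhani's integration formula organized by the topological type $i$ of the separating curve, the bound $V_{g-i,1}(t)\le e^{t/2}V_{g-i,1}$ on the large side, the explicit quadratic $V_{1,1}(t)$ on the small side to produce $L^3e^{L/2}=(\log g)^3g^{a/2}$ from the $i=1$ term, and the volume ratio $V_{g-1,1}/V_g=O(1/g)$. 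That is exactly where the exponent $g^{a/2-1}(\log g)^3$ comes from, and the hypothesis $a<2$ enters only to make this $o(1)$.

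The one place your write-up is not airtight is the tail $i\ge 2$. You bound the $i$-th summand by $O\bigl((\log g)^{6i-3}g^{a/2}/g^{2i-1}\bigr)$ using the degree-$(6i-4)$ polynomial $V_{i,1}(t)$, but the implied constants there depend on $i$ (the coefficients of $V_{i,1}$ and the constants in $V_{i,1}V_{g-i,1}/V_g\asymp c_i\,g^{-(2i-1)}$ grow rapidly with $i$), and you must sum over $i$ up to $g/2$, so term-by-term domination by the $i=1$ term does not by itself close the argument. The standard repair, and the one in \cite{Mirz13}, is to treat all $i\ge 2$ at once: use the exponential bound $V_{i,1}(t)\le e^{t/2}V_{i,1}$ on \emph{both} factors, giving $\int_0^L t\,e^t\,dt=O(L e^L)=O((\log g)\,g^a)$, and combine it with the summed volume estimate $\sum_{i\ge 2}V_{i,1}V_{g-i,1}=O(V_g/g^2)$ from \cite[Section 3]{Mirz13}; since $a<2$ this contributes $O((\log g)\,g^{a-2})=o\bigl((\log g)^3g^{a/2-1}\bigr)$. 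Also note the integration formula carries a constant $2^{-M(\gamma)}|\mathrm{Sym}(\gamma)|^{-1}$ (the factor $1/2$ for the $S_{1,1}$ piece coming from its elliptic involution, in addition to your $m_i$), but this only affects constants, not the stated $O(\cdot)$.
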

\noindent This result in particular implies that for any $\epsilon>0$,
\be \label{lower-ss}
\lim \limits_{g\to \infty} \Prob \left(X\in \sM_g \mid \ \lss(X)>(2-\epsilon)\log g \right)=1.
\ene
The second one is:
\begin{thm*} \cite[Theorem 1]{NWX20}
Let $\omega(g)$ be a function satisfying
\be \label{eq-omega}
\lim \limits_{g\to \infty}\omega(g)= +\infty \ \textit{and} \ \lim \limits_{g\to \infty}\frac{\omega(g)}{\log\log g} = 0.\nonumber
\ene
Consider the following conditions on surfaces $X\in\M_g$:
\begin{itemize}
\item[(a)] \label{item_main1} $|\ell_{\sys}^{\rm sep}(X)-(2\log g - 4\log \log g)| \leq \omega(g)$;
\item[(b)] \label{item_main2} $\ell_{\sys}^{\rm sep}(X)$ is achieved by a simple closed geodesic separating $X$ into $S_{1,1}\cup S_{g-1,1}$.
\end{itemize}

Then we have
$$
\lim \limits_{g\to \infty} \Prob\left(X\in \M_g \mid \ \textit{$X$ satisfies $(a)$ and $(b)$} \right)=1.
$$ 
\end{thm*}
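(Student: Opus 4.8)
The plan is to run a first-and-second moment argument, counting simple closed geodesics according to the topological type of the separating curve, and to show that the threshold is governed entirely by curves that cut off a one-holed torus. For $1\le h\le g-1$ write $N_h(X,L)$ for the number of simple closed geodesics on $X$ of length at most $L$ that separate $X$ into pieces homeomorphic to $S_{h,1}$ and $S_{g-h,1}$. By Mirzakhani's integration formula,
\[
\E\big[N_h(\cdot,L)\big]=\frac{1}{V_g}\int_0^L \ell\, V_{h,1}(\ell)\,V_{g-h,1}(\ell)\,d\ell
\]
for $1\le h<g/2$ (with an extra factor $\tfrac12$ when $h=g/2$), where $V_{h,1}(\ell)$ denotes the Weil-Petersson volume of the moduli space of genus-$h$ surfaces with one boundary component of length $\ell$. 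The whole scheme rests on three inputs about these volumes: the large-genus asymptotics $V_{g,n}\asymp \frac{(2g-3+n)!(4\pi^2)^{2g-3+n}}{\sqrt g}$ (Mirzakhani--Zograf), the resulting ratio estimate $\frac{V_{g-1,1}}{V_g}=\frac{1}{8\pi^2 g}(1+o(1))$, and Mirzakhani's sinh-expansion $V_{g,n}(\ell_1,\dots,\ell_n)=V_{g,n}\prod_i\frac{\sinh(\ell_i/2)}{\ell_i/2}(1+o(1))$, together with the fact that $V_{1,1}(\ell)$ is an explicit quadratic polynomial with positive leading coefficient.

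First I would isolate the dominant type. Feeding the volume asymptotics into the displayed integral and using $\frac{\sinh(\ell/2)}{\ell/2}\sim \frac{e^{\ell/2}}{\ell}$ for large $\ell$ gives, for $L\sim 2\log g$,
\[
\E\big[N_1(\cdot,L)\big]\sim \frac{V_{g-1,1}}{V_g}\cdot\frac{L^2 e^{L/2}}{24}\sim \frac{L^2 e^{L/2}}{192\pi^2 g}
\]
(the constant depends on the normalization of $V_{1,1}$ and is immaterial), whereas for each fixed $h\ge2$ the factor $\frac{V_{h,1}V_{g-h,1}}{V_g}$ is of order $g^{-3}$ and decreases further as $h$ grows toward $g/2$, so that $\sum_{h\ge2}\E[N_h(\cdot,L)]=o(1)$ even at $L=L_+:=2\log g-4\log\log g+\omega(g)$. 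Writing $L=2\log g-4\log\log g+c$ turns the first display into $\frac{e^{c/2}}{48\pi^2}(1+o(1))$, which makes transparent why $2\log g-4\log\log g$ is the critical window: the expected number of type-$(1,g-1)$ geodesics below $L$ is bounded when $c=O(1)$, tends to $0$ as $c\to-\infty$, and tends to $\infty$ as $c\to+\infty$.

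The lower bound in (a) and the exclusion of higher types then follow from Markov's inequality: with $L_-:=2\log g-4\log\log g-\omega(g)$ we get $\Prob(\lss<L_-)\le\sum_{h\ge1}\E[N_h(\cdot,L_-)]=\frac{e^{-\omega(g)/2}}{48\pi^2}(1+o(1))\to0$, while $\Prob\big(\exists\text{ a separating geodesic of type }(h,g-h),\ h\ge2,\text{ of length}\le L_+\big)\le\sum_{h\ge2}\E[N_h(\cdot,L_+)]\to0$. For the upper bound in (a) and condition (b) I would apply the second moment method to $N:=N_1(\cdot,L_+)$, which has $\E[N]\sim\frac{e^{\omega(g)/2}}{48\pi^2}\to\infty$. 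Expanding $N^2=N+\sum_{\gamma\ne\gamma'}\mathbf 1[\ell_\gamma\le L_+]\mathbf 1[\ell_{\gamma'}\le L_+]$, the disjoint ordered pairs cut off two disjoint one-holed tori, and the pair-counting version of Mirzakhani's formula contributes $\frac{V_{g-2,2}}{V_g}\big(\int_0^{L_+}\ell\,V_{1,1}(\ell)\frac{\sinh(\ell/2)}{\ell/2}d\ell\big)^2$; the volume identity $\frac{V_{g-2,2}\,V_g}{V_{g-1,1}^2}\to1$ shows this equals $\E[N]^2(1+o(1))$. Hence $\mathrm{Var}(N)=\E[N]+(\text{intersecting pairs})+o(\E[N]^2)$, and Chebyshev gives $\Prob(N=0)\le\mathrm{Var}(N)/\E[N]^2\to0$, i.e.\ with high probability there is a type-$(1,g-1)$ geodesic of length $\le L_+$; combined with the Markov estimates this yields both (a) and (b).

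The hard part will be making the two error terms genuinely negligible in the growing-length regime $L\sim2\log g$. Because $e^{L/2}$ is of order $g^{1/2}$ (and $e^{L}$ of order $g$), the relative errors in the sinh-expansion and in the volume-ratio asymptotics, which are typically of size $O(\ell^2/g)$, must be shown to remain $o(1)$ after integration against weights as large as $e^{\ell}$; this forces the use of \emph{uniform} volume estimates valid up to length $\sim2\log g$ rather than merely pointwise ones. The genuinely delicate step, however, is bounding the contribution of \emph{intersecting} pairs of type-$(1,g-1)$ geodesics and showing it is $o(\E[N]^2)$: such a pair fills a subsurface of strictly smaller complexity, so one must combine a topological case analysis of the possible filling subsurfaces with volume bounds for multicurves in order to beat the $e^{\ell_1+\ell_2}$ growth, and this is where the bulk of the technical work lies.
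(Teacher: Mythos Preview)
This theorem is not proved in the present paper at all: it is quoted verbatim as \cite[Theorem~1]{NWX20} and used as a black box (only the consequence \eqref{upp-ss} is needed). So there is no proof here to compare your proposal against.

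That said, your outline is the correct strategy and is essentially the one carried out in \cite{NWX20}: Mirzakhani's integration formula for $\E[N_h(\cdot,L)]$, the Mirzakhani--Zograf large-genus volume asymptotics to show that the $(1,g-1)$ type dominates and to pin down the window $2\log g-4\log\log g$, Markov's inequality for the lower threshold and the exclusion of $h\ge 2$, and a second-moment/Chebyshev argument on $N_1(\cdot,L_+)$ using $V_{g-2,2}V_g/V_{g-1,1}^2\to 1$ for the disjoint pairs. You have also correctly identified where the real work in \cite{NWX20} lies: (i) the ``sinh-expansion'' you invoke is only an inequality in Mirzakhani's work, and one needs the uniform two-sided estimates for $V_{g,n}(\ell)$ up to lengths of order $2\log g$ established there; and (ii) the contribution of \emph{intersecting} pairs of $(1,g-1)$ curves to the second moment must be shown to be $o(\E[N]^2)$, which requires the topological classification of the subsurface they fill together with multicurve volume bounds. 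Both points are exactly the technical core of \cite{NWX20}; your sketch stops at the right place.
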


\noindent In particular, this implies that for any $\epsilon>0$ 
\be \label{upp-ss}
\lim \limits_{g\to \infty} \Prob\left(X\in \M_g \mid \ (2-\epsilon)\log g< \lss(X)<2\log g \right)=1.
\ene 

We now define a special subset of moduli space $\mathcal{B}_g$:
\[\mathcal{B}_g:=\{X\in \sM_g \mid \ \lss(X)<2\log g\}.\]

Note that \eqref{upp-ss} implies 
\begin{equation}\label{nwx-sep}
\lim \limits_{g\to \infty} \Prob \left(X\in \mathcal{B}_g \right)=1.
\end{equation}
Now we are ready to show our main result. 
\begin{proof}[Proof of Theorem \ref{mt-1}]
First we recall the following result of Brooks in \cite{Brooks90} (or see \cite[Equation (4.21)]{Mirz13}), which for any $X\in \sM_g$ bounds $\diam(X)$ from above in terms of the systole and the Cheeger constant:
\[\diam(X)\leq 2 \left(\ls(X)/2+\frac{1}{h(X)}\cdot \log \left(\frac{2\pi(g-1)}{\area (B_{\H}(\ls(X)/2))} \right) \right).\]
(The quantity $\area (B_{\H}(\ls(X)/2))$ is the hyperbolic area of a geodesic ball of radius $\ls(X)/2$ in the hyperbolic plane $\H$.) 

Recall that $\ell_1(X)=\min\{\ell_{sys}(X),1\}$ and that $\ls(X)=O(\log g)$. The above result of Brooks together with Proposition \ref{lss<diam} imply that
\begin{equation}\label{brooks}
\lss(X)=O\left(\log g+\frac{\log g}{h(X)}+\frac{|\log (\ell_1(X))|}{h(X)}\right)
\end{equation}
where the implied constant is independent of $g$.

Since $\eps>0$ is arbitrary, it follows from \eqref{lower-ss} that 
\be
\liminf \limits_{g\to \infty} \frac{\ewp[\lss]}{\log g}\geq 2.
\ene

Now we provide the other inequality. Let $\mathcal{A}_g$ be the complement of $\mathcal{B}_g$ in $\sM_g$:
\be
\sA_g:=\{X\in \sM_g \mid \ X \notin \mathcal{B}_g\}.\nonumber
\ene
So by \eqref{nwx-sep} we know that
\begin{equation}\label{0mea}
\limg \frac{\Vol(\sA_g)}{V_g} =0.
\end{equation}
We split the integral as
\begin{eqnarray*}
\frac{\ewp[\lss]}{\log g}=\frac{\int_{\mathcal{B}_g} \lss(X) dX}{\log g \cdot V_g}+\frac{\int_{\mathcal{A}_g} \lss(X) dX}{\log g \cdot V_g}.
\end{eqnarray*}
By \eqref{brooks}, there exists a uniform constant $c>0$ independent of $g$ such that
\begin{eqnarray*}
\frac{\ewp[\lss]}{\log g}<2+c\cdot \left( \frac{\Vol(\sA_g)}{V_g} + \frac{\int_{\sA_g} \frac{1}{h(X)} dX }{V_g}+\frac{1}{\log g} \frac{\int_{\sM_g} \frac{|\log (\ell_1(X))|}{h(X)}dX}{V_g}\right).
\end{eqnarray*}
Let $g\to \infty$, then it follows by \eqref{0mea}, Proposition \ref{h-small} and Proposition \ref{l1-h} that
\be
\limsup \limits_{g\to \infty} \frac{\ewp[\lss]}{\log g}\leq 2.
\ene
which completes the proof.\end{proof}

\bibliographystyle{plain}
\bibliography{wp}
	
\end{document}